\documentclass[11pt, a4paper, reqno]{amsart}

\usepackage{anysize}
\usepackage[utf8]{inputenc}
\usepackage{amsmath}
\usepackage{mathtools}
\usepackage{amssymb}
\usepackage{enumerate}
\usepackage{calc}
\usepackage{xcolor}
\usepackage{hyperref}
\usepackage{thmtools, thm-restate}

\marginsize{2.5cm}{2.5cm}{3cm}{3cm}


\newtheorem{corollary}{Corollary}
\newtheorem{lemma}{Lemma}

\newcommand{\C} {{\mathbb C}}      		    
\newcommand{\N} {{\mathbb N}}		        
\newcommand{\R} {{\mathbb R}}		        
\newcommand{\Z} {{\mathbb Z}}			    

\renewcommand{\O}{\mathcal{O}}			    

\renewcommand{\c}{\widetilde{c}} 		    

\newcommand{\defeq}{\coloneqq}              
\newcommand{\eref}[1]{(\ref{#1})}           
\newcommand{\sref}[1]{Section~\ref{#1}}     
\newcommand{\thmref}[1]{Theorem~\ref{#1}}   
\newcommand{\corref}[1]{Corollary~\ref{#1}} 
\newcommand{\lemref}[1]{Lemma~\ref{#1}}     

\begin{document}

\title[On Ramanujan Sums of a Real Variable and a New Ramanujan Expansion for the...]{On Ramanujan Sums of a Real Variable and a New Ramanujan Expansion for the Divisor Function}

\author{Matthew S. Fox}
\address{Department of Physics\\ Harvey Mudd College}
\email{msfox@hmc.edu}

\author{Chaitanya Karamchedu}
\address{Department of Mathematics\\ Harvey Mudd College}
\email{ckaramchedu@hmc.edu}

\maketitle

\begin{abstract}
We show that the absolute convergence of a Ramanujan expansion does not guarantee the convergence of its real variable generalization, which is obtained by replacing the integer argument in the Ramanujan sums with a real number. We also construct a new Ramanujan expansion for the divisor function. While our expansion is amenable to a continuous and absolutely convergent real variable generalization, it only interpolates the divisor function locally on $\R$.
\end{abstract}

\section{Introduction}
\label{sec:introduction}

In \cite{Ramanujan1}, Ramanujan introduced the exponential sums $c_q : \N \rightarrow \C$ defined by
\begin{equation}
c_{q}(n) \defeq \sum_{k \in (\Z/q\Z)^\times} e^{2\pi i k n/ q}
\label{eq:RamanujanSum}
\end{equation}
for any two natural numbers $q$ and $n$. These sums are known as \emph{Ramanujan sums} and have many remarkable properties. For example, $c_q(n)$ is integer-valued, $q$-even in the argument $n$, and multiplicative in the index $q$. See the surveys \cite{Lucht1, Murty1} and books \cite{Schwarz1, Sivaramakrishnan1} for details and more. Additionally, several important arithmetic functions can be expressed as a linear combination of Ramanujan sums of the form
\begin{equation}
f(n) = \sum_{q \geq 1} \widehat{f}(q) c_q(n)
\label{eq:RamanujanExpansion}
\end{equation}
for appropriate complex numbers $\widehat{f}(q)$. If this series converges, we say the arithmetic function $f(n)$ admits a \emph{Ramanujan expansion}. One famous example is the divisor function $\sigma_k(n) \defeq \sum_{d | n} d^k$, which admits the Ramanujan expansion
\begin{equation}
\sigma_k(n) = n^{k} \zeta(k + 1)\sum_{q \geq 1} \frac{c_{q}(n)}{q^{k + 1}}.
\label{eq:divisorFunction}
\end{equation}
This series converges absolutely when $k > 0$ since $|c_q(n)| \leq \sigma_1(n)$ for every $q$ \cite{Hardy1}. Note, however, that this and all other Ramanujan expansions are never unique since $\sum_{q \geq 1} c_q(n) / q = 0$ for all $n$ \cite{Ramanujan1}.

Several studies have generalized the notion of Ramanujan expansions to a wider class of functions. For example, Cohen has formulated a theory of Ramanujan expansions for unitary analogues of arithmetic functions \cite{Cohen2}, while T\'oth and Ushiroya have developed a generalization to multivariate arithmetic functions \cite{Toth1, Ushiroya1}. However, these generalizations all remain within the context of \emph{arithmetic} functions. It seems interesting, therefore, to formulate a theory of Ramanujan expansions for non-arithmetic functions, like functions of a real variable. In the closing to \cite{Murty1}, Murty suggests a simple way one might do this: ``The interesting thing about the right hand side of [\eref{eq:RamanujanExpansion}] is that if the series converges absolutely and we replace $n$ by a real number $x$, we obtain a continuous function which interpolates the given arithmetical function. In this way, we can view the Ramanujan expansion as a continuous analogue of the discretely defined arithmetical function.''

In this note, we briefly explore Murty's idea of generalizing \eref{eq:RamanujanSum} to the continuous sum $\c_q : \R \rightarrow \C$ defined by
\begin{equation}
\c_q(x) \defeq \sum_{k \in (\Z/q\Z)^{\times}} e^{2 \pi i k x / q}
\label{eq:GeneralizedRamanujanSumDefinition}
\end{equation}
for any real number $x$ and natural number $q$. Since $c_q(n) = \c_q(n)$ for all natural numbers $n$ and $q$, the function
\begin{equation}
\widetilde{f}(x) \defeq \sum_{q \geq 1} \widehat{f}(q) \c_q(x)
\label{eq:RamanujanExpansionCont}
\end{equation}
will indeed interpolate the arithmetic function $f(n)$ in \eref{eq:RamanujanExpansion}, provided \eref{eq:RamanujanExpansionCont} converges. The convergence properties of \eref{eq:RamanujanExpansionCont} are thus central to the viability of Murty's idea. In this paper we explore what, if anything, the absolute convergence of \eref{eq:RamanujanExpansion} implies about the limiting behavior of \eref{eq:RamanujanExpansionCont}.

Heuristically, one might expect $\c_q(x)$ to behave roughly like a sum of $|(\Z / q\Z)^{\times}| = \varphi(q)$ random complex numbers, where the real and imaginary parts are each uniformly distributed on the interval $[-1, 1] \subset \R$. Insofar as this approximation holds, one would expect $\c_q(x) = \O_x(1)$,\footnote{We use the asymptotic notation $f = \O_x(g)$ to denote the estimate $|f| \leq C_x g$ for some absolute constant $C_x > 0$ that may depend on the parameter $x$.} since the expectation value of the sum equals zero. Indeed, this behavior is also what one would naturally intuit from the asymptotic behavior of $c_q(n)$. In this context, therefore, it appears reasonable to adopt Murty's expectation that the absolute convergence of \eref{eq:RamanujanExpansion} implies the absolute convergence of \eref{eq:RamanujanExpansionCont}. 

Surprisingly, however, this intuition is flawed due to the following theorem (\sref{sec:sectionTwo}):

\begin{restatable}{theorem}{maintheorem}
\label{thm:maintheorem}
For fixed $x \in \R \backslash \Z$ and all $q > 2\pi|x|$,
$$
\c_q(x) = \frac{e^{2\pi i x} - 1}{2\pi i x} \varphi(q) + \O_x(\sigma_0(q)).
$$
\end{restatable}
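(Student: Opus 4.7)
The plan is to reduce $\c_q(x)$ to a linear combination of geometric-type sums via M\"obius inversion, and then to approximate each such sum by an integral. First I would use the identity that $\sum_{d\mid \gcd(k,q)}\mu(d)$ equals $1$ when $\gcd(k,q)=1$ and $0$ otherwise, interchange the order of summation in \eref{eq:GeneralizedRamanujanSumDefinition}, and reindex $k = dj$ to obtain
$$\c_q(x) = \sum_{d\mid q} \mu(d)\, T_{q/d}(x), \qquad T_m(x) \defeq \sum_{j=1}^m e^{2\pi i j x/m}.$$
Using the classical identity $\varphi(q) = \sum_{d\mid q} \mu(d)\,(q/d)$, the asserted main term $\varphi(q)(e^{2\pi i x}-1)/(2\pi i x)$ will emerge exactly when each $T_m(x)$ is replaced by $m(e^{2\pi i x}-1)/(2\pi i x)$, so the theorem will reduce to the uniform estimate
$$E(m,x) \defeq T_m(x) - \frac{m(e^{2\pi i x}-1)}{2\pi i x} = \O_x(1) \quad \text{for every integer } m \geq 1.$$

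Next I would establish this bound by interpreting $T_m(x)$ as $m$ times the right-endpoint Riemann sum of $f(t)\defeq e^{2\pi i x t}$ on $[0,1]$ using $m$ equal subintervals; the integral is $\int_0^1 f(t)\,dt = (e^{2\pi i x}-1)/(2\pi i x)$. Since $\|f'\|_\infty = 2\pi|x|$, the mean-value-theorem estimate on each subinterval $[(j-1)/m,\, j/m]$ bounds $|\int_{(j-1)/m}^{j/m}(f(j/m)-f(t))\,dt|$ by $\pi|x|/m^2$; summing over $j$ and scaling by $m$ yields $|E(m,x)| \leq \pi|x|$, uniformly in $m \geq 1$.

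Combining the two steps gives
$$\c_q(x) - \frac{e^{2\pi i x}-1}{2\pi i x}\,\varphi(q) = \sum_{d\mid q} \mu(d)\, E(q/d,x),$$
which in absolute value is at most $\pi|x|\,\sigma_0(q) = \O_x(\sigma_0(q))$, as desired. The main obstacle to anticipate is securing this uniform-in-$m$ bound: the closed form $T_m(x) = e^{2\pi i x/m}(e^{2\pi i x}-1)/(e^{2\pi i x/m}-1)$ invites a Taylor expansion in $2\pi x/m$, but this is only evidently small when $m$ is large compared to $|x|$, while the divisors $d\mid q$ can shrink $m = q/d$ all the way down to $1$. The Riemann-sum viewpoint sidesteps this automatically; an equivalent route is to note that near each genuine pole $\alpha = 2\pi k$ ($k \neq 0$) of $g(\alpha)\defeq e^{i\alpha}/(e^{i\alpha}-1) - 1/(i\alpha)$, the prefactor $e^{2\pi i x}-1$ supplies a compensating zero, because $\alpha = 2\pi x/m$ lying near $2\pi k$ forces $x \approx km$. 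The hypothesis $q > 2\pi|x|$ is not strictly needed for this proof, but it likely simplifies the authors' direct treatment of the dominant $d = 1$ contribution, since it ensures $|2\pi x/q| < 1$ and so permits a straightforward Taylor expansion there.
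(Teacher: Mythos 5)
Your proof is correct, and although it opens with the same M\"obius-inversion decomposition as the paper --- $\c_q(x) = \sum_{d\mid q}\mu(d)\,T_{q/d}(x)$, with the main term extracted from $\varphi(q)=\sum_{d\mid q}\mu(d)\,(q/d)$ --- the key estimate is obtained by a genuinely different argument. The paper sums each inner sum as a geometric series in closed form \eref{eq:SumSeparated} and Taylor-expands $(e^{-2\pi i x/d}-1)^{-1}$ via \lemref{lem:mainlemma}, an expansion valid only for divisors $d>2\pi|x|$; it must therefore split the divisor sum into small and large divisors, bound the at most $\lfloor 2\pi|x|\rfloor$ small-divisor terms crudely by $\O_x(1)$ apiece, and track the resulting bookkeeping in \eref{eq:d>rSum}. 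Your Riemann-sum comparison of $T_m(x)$ against $m\int_0^1 e^{2\pi i x t}\,dt$ replaces all of this with a single bound, $|E(m,x)|\le \pi|x|$, that is uniform in $m\ge 1$ (and the complex-valued ``mean value'' step is legitimate: write $f(j/m)-f(t)=\int_t^{j/m}f'(s)\,ds$ and apply the triangle inequality). This buys three things: no case split; an explicit error constant, $\pi|x|\,\sigma_0(q)$, in place of an unspecified $\O_x(\sigma_0(q))$; and validity for every $q\ge 1$, so that, as you observe, the hypothesis $q>2\pi|x|$ in the statement is not actually needed for the asymptotic to hold. It also sidesteps the near-singularities of the closed form $T_m(x)=e^{2\pi i x/m}(e^{2\pi i x}-1)/(e^{2\pi i x/m}-1)$ when $x$ lies close to a multiple of $m$, which is exactly the regime your final paragraph flags as the obstruction to a naive Taylor-expansion treatment. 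What the paper's route buys in exchange is only local brevity --- its \lemref{lem:mainlemma} is a one-line Taylor expansion --- at the price of the divisor case analysis and the restriction on $q$.
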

In other words, $\c_q(x) = \O_x(\varphi(q))$ for all $q > 2\pi|x|$. In big-$\Omega$ notation, it follows that the real variable generalization $\widetilde{f}(x)$ of the arithmetic function $f(n)$ will diverge for all $x \in \R \backslash \Z$ whenever the Ramanujan coefficients $\widehat{f}(q)$ of $f(n)$ are $\Omega(q^{-2})$. As an example, we show that although the Ramanujan expansion \eref{eq:divisorFunction} for the sum-of-divisors function, $\sigma_1(n)$, converges absolutely, its real variable generalization does not:
\begin{corollary}
\label{cor:divisorCor}
The generalized sum-of-divisors function,
\begin{equation}
\widetilde{\sigma}_1(x) \defeq x\zeta(2)\sum_{q \geq 1} \frac{\c_q(x)}{q^{2}},
\label{eq:sumofdivisorcontinuous}
\end{equation}
diverges for all $x \in \R \backslash \Z$.
\end{corollary}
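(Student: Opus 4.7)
The plan is a direct substitution argument: write the series defining $\widetilde{\sigma}_1(x)$ as a finite initial segment $q \leq 2\pi|x|$ plus a tail, and apply \thmref{thm:maintheorem} termwise to the tail. For $q > 2\pi|x|$,
$$
\frac{\c_q(x)}{q^2} = \frac{e^{2\pi i x} - 1}{2\pi i x}\cdot \frac{\varphi(q)}{q^2} + \O_x\!\left(\frac{\sigma_0(q)}{q^2}\right),
$$
so the $N$-th partial sum $S_N(x) := x\zeta(2)\sum_{q=1}^{N} \c_q(x)/q^2$ splits into a main term proportional to $\sum_{q \leq N} \varphi(q)/q^2$ and a remainder controlled by $\sum_{q \leq N} \sigma_0(q)/q^2$, plus a bounded piece coming from $q \leq 2\pi|x|$.

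The remainder piece converges as $N \to \infty$, since $\sigma_0(q) = O_\epsilon(q^\epsilon)$ guarantees $\sum_{q \geq 1} \sigma_0(q)/q^2 < \infty$. The main term, however, has magnitude bounded below by
$$
|x\zeta(2)|\cdot\frac{|e^{2\pi i x}-1|}{2\pi|x|}\sum_{q \leq N} \frac{\varphi(q)}{q^2},
$$
and the prefactor is strictly positive for $x \in \R \backslash \Z$ because $e^{2\pi i x} \neq 1$. Thus the divergence of $\widetilde\sigma_1(x)$ reduces to showing that $\sum_{q \geq 1} \varphi(q)/q^2 = \infty$. Restricting to primes gives
$$
\sum_{q \geq 1} \frac{\varphi(q)}{q^2} \geq \sum_{p \text{ prime}} \frac{p-1}{p^2} \geq \frac{1}{2}\sum_{p} \frac{1}{p},
$$
which diverges by Mertens' theorem. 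Combining, $|S_N(x)| \to \infty$, so \eref{eq:sumofdivisorcontinuous} diverges for every $x \in \R \backslash \Z$.

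There is no real obstacle here once \thmref{thm:maintheorem} is in hand; the only mild subtlety is verifying that the complex-valued main term cannot cancel the divergence, which is handled by extracting the (fixed, nonzero) prefactor $(e^{2\pi i x}-1)/(2\pi i x)$ and bounding $|S_N(x)|$ from below by its real-valued magnitude.
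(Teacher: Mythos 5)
Your proof is correct and follows essentially the same route as the paper: split the series at $q = 2\pi|x|$, apply \thmref{thm:maintheorem} termwise to the tail, observe that the $\O_x(\sigma_0(q))/q^2$ contribution converges absolutely while the main term carries the divergent series $\sum_{q}\varphi(q)/q^2$ multiplied by the fixed nonzero prefactor $(e^{2\pi i x}-1)/(2\pi i x)$, so the partial sums blow up in magnitude. The only difference is in a sub-step: you establish $\sum_{q}\varphi(q)/q^2 = \infty$ by restricting to primes and invoking Mertens' theorem, whereas the paper (in the $k=1$ case of the proof of \corref{cor:divisorCor2}) uses the Dirichlet-series identity $\sum_{q}\varphi(q)/q^{k+1} = \zeta(k)/\zeta(k+1)$ and lets $k \to 1^+$.
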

This raises the question, does there exist a continuous and absolutely convergent Ramanujan expansion that interpolates $\sigma_1(n)$? In \sref{sec:divisorExtension}, we establish that there does, but it is unnatural because it only interpolates $\sigma_1(n)$ in the open neighborhood $(n-1,n+1) \subset \R$. Consequently, our work leaves open the question of whether there exists a continuous and absolutely convergent Ramanujan expansion that interpolates $\sigma_1(n)$ \emph{globally} on $\R$, or at least on the positive reals.

\section{Proofs of Results}
\label{sec:sectionTwo}

The proof of \thmref{thm:maintheorem} relies on the following lemma:

\begin{lemma}
\label{lem:mainlemma}
If $x \in \R \backslash \Z$ and $d > 2 \pi |x|$, then 
$$
\frac{1}{e^{-2\pi i x / d} - 1} = -\frac{d}{2\pi i x} + \O_x(1).
$$
\end{lemma}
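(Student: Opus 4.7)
The plan is to reduce the claim to a Taylor estimate near the origin. Setting $z \defeq -2\pi i x/d$, the hypothesis $d > 2\pi|x|$ becomes $|z| < 1$, and the conclusion collapses to the compact statement
$$
\frac{1}{e^z - 1} = \frac{1}{z} + \O_x(1).
$$
After this substitution, the lemma ceases to depend on $x$ except through the constraint $|z| < 1$, so any bound I obtain on the disk $|z| < 1$ will automatically be $\O_x(1)$.

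To prove the reduced claim, I would factor $e^z - 1 = z(1 + r(z))$, where
$$
r(z) \defeq \sum_{n \geq 1} \frac{z^n}{(n+1)!}.
$$
An elementary bound gives $|r(z)| \leq (e - 2)|z|$ on $|z| \leq 1$; in particular $|r(z)| < e - 2 < 1$ throughout $|z| < 1$, so the denominator $1 + r(z)$ is bounded away from zero. Rearranging,
$$
\frac{1}{e^z - 1} - \frac{1}{z} = -\frac{1}{z} \cdot \frac{r(z)}{1 + r(z)},
$$
and the right-hand side has modulus at most $(e-2)/(3 - e)$ on $|z| < 1$. Undoing the substitution via $z^{-1} = -d/(2\pi i x)$ yields the lemma.

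The only step requiring real care is the geometric-series inversion of $1 + r(z)$, but this is immediate from the uniform bound $|r(z)| \leq e - 2$ on the disk of interest; there is no genuine obstacle. A cleaner alternative I might adopt in the write-up is to note that $h(w) \defeq (e^w - 1)^{-1} - w^{-1}$ has a removable singularity at $w = 0$ (with $h(0) = -1/2$) and is analytic on $|w| < 2\pi$, hence bounded on the compact disk $|w| \leq 1$. Since every admissible $z$ lies in that disk, this delivers the conclusion with no explicit Taylor bookkeeping at all.
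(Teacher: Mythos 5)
Your proof is correct and is essentially the paper's own argument made explicit: the paper's one-line proof (``Taylor-expanding the exponential'') is precisely your factorization $e^z - 1 = z\left(1 + r(z)\right)$ with the Taylor tail $r(z)$ controlled and inverted by a geometric series. Your version in fact gives slightly more than the statement requires, since the bound $(e-2)/(3-e)$ is an absolute constant, so the error term is $\O(1)$ uniformly in $x$ rather than merely $\O_x(1)$.
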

\begin{proof}
The result follows immediately from Taylor-expanding the exponential.
\end{proof}

\maintheorem*

\begin{proof}
The sum of the M\"obius function over the divisors of $n$, $\sum_{d | n}\mu(d)$, is one if $n = 1$ and zero otherwise. Hence, $\c_q(x)$ in \eref{eq:GeneralizedRamanujanSumDefinition} is the same as
$$
\c_q(x) = \sum_{k = 1}^{q} e^{2 \pi i k x / q} \sum_{d | (k,q)} \mu(d) = \sum_{d | q} \mu(d) \sum_{\substack{k = 1 \\ d | k}}^{q} e^{2\pi i k x / q} = \sum_{d | q} \mu(d) \sum_{l=1}^{q/d} e^{2\pi i l d x  / q}.
$$
Since $x \in \R\backslash\Z$, $\left|e^{2\pi i l d x / q}\right| < 1$, so we can apply the geometric series formula to the inner sum:
\begin{equation}
\c_q(x) = \left(1 - e^{2\pi i x}\right)\sum_{d | q} \frac{\mu(d)}{e^{-2\pi i d x / q} - 1} = \left(1 - e^{2\pi i x}\right)\sum_{d | q} \frac{\mu(q/d)}{e^{-2\pi i x / d} - 1}.
\label{eq:SumSeparated}
\end{equation}
Since $q > 2\pi|x|$, we can partition this sum into a sum over $d < 2\pi|x|$ and another over $d > 2\pi|x|$, 
$$
\c_q(x) =  \left(1 - e^{2\pi i x}\right) \sum_{\substack{d | q \\ d<2\pi|x|}} \frac{\mu(q/d)}{e^{-2\pi i x / d} - 1} +  \left(1 - e^{2\pi i x}\right) \sum_{\substack{d | q \\ d > 2\pi|x|}} \frac{\mu(q/d)}{e^{-2\pi i x / d} - 1} 
$$
\lemref{lem:mainlemma} allows us to write the $d > 2\pi|x|$ sum as
$$
\sum_{\substack{d | q \\ d>2\pi|x|}} \frac{\mu(q/d)}{e^{-2\pi i x / d} - 1} = -\frac{1}{2\pi i x}\sum_{\substack{d | q \\ d>2\pi|x|}} \mu\left(\frac{q}{d}\right) d + \sum_{\substack{d | q \\ d>2\pi|x|}} \mu\left(\frac{q}{d}\right) \O_x(1).
$$
Since
$$
\varphi(q) = \sum_{d | q} \mu\left(\frac{q}{d}\right)d = \sum_{\substack{d | q \\ d<2\pi|x|}} \mu\left(\frac{q}{d}\right)d + \sum_{\substack{d | q \\ d>2\pi|x|}} \mu\left(\frac{q}{d}\right)d
$$
and, similarly,
$$
\O_x(\sigma_0(q)) = \sum_{d | q} \mu\left(\frac{q}{d}\right)\O_x(1) = \sum_{\substack{d | q \\ d<2\pi|x|}} \mu\left(\frac{q}{d}\right)\O_x(1) + \sum_{\substack{d | q \\ d>2\pi|x|}} \mu\left(\frac{q}{d}\right) \O_x(1)
$$
it follows that
\begin{equation}
\sum_{\substack{d | q \\ d>2\pi|x|}} \frac{\mu(q/d)}{e^{-2\pi i x / d} - 1} = -\frac{1}{2\pi i x}\varphi(q) + \O_x(\sigma_0(q)) - \frac{1}{2\pi i x}\sum_{\substack{d|q\\ d < 2\pi|x|}} \mu\left(\frac{q}{d}\right) d - \sum_{\substack{d|q\\ d < 2\pi|x|}} \mu\left(\frac{q}{d}\right) \O_x(1).
\label{eq:d>rSum}
\end{equation}
Now consider the $d < 2\pi|x|$ sum in \eref{eq:SumSeparated}. Since $q$ has at most $\lfloor 2\pi|x| \rfloor$ divisors less than $2\pi|x|$,
$$
\left|\sum_{\substack{d | q \\ d<2\pi|x|}}  \frac{\mu(q/d)}{e^{-2\pi i x / d} - 1}\right| \leq \sum_{j = 1}^{\lfloor 2\pi|x| \rfloor} \left| \frac{1}{e^{-2\pi i x / j} - 1} \right| = \O_x(1).
$$
By a similar argument, the two $d < 2\pi|x|$ sums in \eref{eq:d>rSum} are also $\O_x(1)$. Thus, together \eref{eq:SumSeparated} and \eref{eq:d>rSum} imply
$$
\c_q(x) = \frac{e^{2\pi i x} - 1}{2\pi i x} \varphi(q) + \O_x(\sigma_0(q)) + \O_x(1),
$$
which completes the proof.
\end{proof}
This theorem implies the following generalization of \corref{cor:divisorCor}:
\begin{corollary}
\label{cor:divisorCor2}
The generalized divisor function
\begin{equation}
\widetilde{\sigma}_k(x) \defeq x^k\zeta(k+1)\sum_{q \geq 1} \frac{\c_q(x)}{q^{k+1}}
\label{eq:divisorExtension}
\end{equation}
converges absolutely for all $x \in \Z$ if $k > 0$ and for all $x \in \R$ if $k > 1$, but diverges for all $x \in \R \backslash \Z$ if $k = 1$.
\end{corollary}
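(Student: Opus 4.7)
The plan is to treat the three cases separately, each reducing to an essentially classical tail estimate on $\sum_q \c_q(x)/q^{k+1}$.

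For the first case ($x \in \Z$, $k > 0$), note that $\c_q(n) = c_q(n)$ for $n \in \Z$ (the set $(\Z/q\Z)^\times$ is closed under negation, so $\c_q$ is even on $\Z$). I would then invoke the classical Ramanujan bound $|c_q(n)| \leq \sigma_1(n)$ mentioned in the introduction, so that $\sum_q |c_q(n)|/q^{k+1} \leq \sigma_1(n)\, \zeta(k+1) < \infty$ whenever $k > 0$. (The value $n=0$ is handled trivially, as the prefactor $x^k$ kills the series.)

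For the second case ($x \in \R$, $k > 1$), I would apply \thmref{thm:maintheorem} to get, for every $q > 2\pi|x|$, the bound $|\c_q(x)| \leq |C(x)|\,\varphi(q) + C'_x \sigma_0(q)$, where $C(x) := (e^{2\pi i x}-1)/(2\pi i x)$. Since $\varphi(q) \leq q$ and $\sigma_0(q) \ll_{\varepsilon} q^{\varepsilon}$, this gives
\[
\frac{|\c_q(x)|}{q^{k+1}} \ll_x \frac{1}{q^k} + \frac{\sigma_0(q)}{q^{k+1}},
\]
whose tail is summable for $k > 1$. The finitely many terms with $q \leq 2\pi|x|$ contribute a bounded amount, so the series converges absolutely.

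The third case ($k=1$, $x \in \R\setminus\Z$) is where the real content lies and where I expect to spend most of the argument. Using \thmref{thm:maintheorem} with $k=1$, write
\[
\frac{\c_q(x)}{q^2} = C(x)\,\frac{\varphi(q)}{q^2} + O_x\!\left(\frac{\sigma_0(q)}{q^2}\right)
\]
for all $q > 2\pi|x|$. The error terms form an absolutely convergent series, because $\sigma_0(q) \ll_{\varepsilon} q^{\varepsilon}$. The main-term series $\sum_q \varphi(q)/q^2$, however, diverges: its Dirichlet series $\sum_q \varphi(q)/q^s = \zeta(s-1)/\zeta(s)$ has a pole at $s=2$, or, more elementarily, partial summation together with $\sum_{q \leq N} \varphi(q) \sim (3/\pi^2) N^2$ yields $\sum_{q \leq N} \varphi(q)/q^2 \sim (6/\pi^2) \log N$. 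Since $x \notin \Z$ implies $e^{2\pi i x} \neq 1$ and hence $C(x) \neq 0$, the partial sums $S_N = \sum_{q \leq N} \c_q(x)/q^2$ satisfy $|S_N| \geq |C(x)|\sum_{q \leq N} \varphi(q)/q^2 - O_x(1) \to \infty$, proving divergence.

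The main obstacle is really just the third case, where one must verify that the unbounded main term is not cancelled by the error; this is why extracting the asymptotic in \thmref{thm:maintheorem} with an explicit $O_x(\sigma_0(q))$ remainder (rather than just $O_x(\varphi(q))$) is essential.
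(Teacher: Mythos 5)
Your proposal is correct and follows essentially the same route as the paper: the integer case via the classical bound $|c_q(n)| \leq \sigma_1(n)$, and the $k = 1$ divergence via the decomposition from \thmref{thm:maintheorem} into a nonvanishing main term proportional to $\sum_{q} \varphi(q)/q^2$ (divergent) plus an absolutely convergent $\O_x(\sigma_0(q))$ error. The only cosmetic differences are that for $k > 1$ the paper uses the trivial bound $|\c_q(x)| \leq \varphi(q)$ directly (valid for all $q$, so \thmref{thm:maintheorem} is not needed there), and it justifies the divergence of $\sum_{q} \varphi(q)/q^2$ by identifying it with $\lim_{k \rightarrow 1^+} \zeta(k)/\zeta(k+1)$ rather than by your (cleaner) partial-summation asymptotic.
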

\begin{proof}
If $x \in \Z$, then $\widetilde{\sigma}_k(x) = \sigma_k(x)$, which converges absolutely when $k > 0$ by the argument after \eref{eq:divisorFunction}. If $x \in \R$, then for $k > 1$,
$$
\sum_{q \geq 1}\left| \frac{\c_q(x)}{q^{k+1}} \right| \leq \sum_{q \geq 1} \frac{\varphi(q)}{q^{k+1}} = \frac{\zeta(k)}{\zeta(k + 1)} < \infty.
$$
If $x \in \R \backslash \Z$ and $k = 1$, however, then it follows from \thmref{thm:maintheorem} that
$$
\sum_{q \geq 1} \frac{\c_q(x)}{q^2} = \sum_{1 \leq q < 2\pi|x|} \frac{\c_q(x)}{q^2} + \frac{e^{2\pi i x} - 1}{2\pi i x} \sum_{q > 2\pi| x|} \frac{\varphi(q)}{q^2} + \sum_{q > 2\pi|x|} \frac{\O_x(\sigma_0(q))}{q^2}.
$$
The sum in the first term is obviously finite, and the series in the last term converges absolutely because $\sum_{q \geq 1} \frac{\sigma_0(q)}{q^2} = \zeta(2)^2$. However, the series in the middle term diverges because
$$
\sum_{q > 2\pi|x|} \frac{\varphi(q)}{q^2}  + \O_x(1) = \sum_{q \geq 1} \frac{\varphi(q)}{q^2} = \lim_{k \rightarrow 1^+} \frac{\zeta(k)}{\zeta(k + 1)}.
$$
Therefore, $\widetilde{\sigma}_1(x)$ diverges for all $x \in \R \backslash \Z$.
\end{proof}
This behavior is surprising. Although extraordinarily well-behaved on integers, any non-integral $\epsilon > 0$ addition to an integer argument causes $\widetilde{\sigma}_1(x)$ to diverge (albeit only logarithmically, as our proof of \corref{cor:divisorCor2} shows). Indeed, this is plainly true for any function $\widetilde{f}(x)$ that has $\widehat{f}(q) = \Omega(q^{-2})$ in its generalized Ramanujan expansion \eref{eq:RamanujanExpansionCont} (assuming it exists). This includes, for example, the generalizations of both Ramanujan's and Hardy's expansions of the zero function \cite{Hardy1, Ramanujan1},
$$
\widetilde{0}_{\text{Ram}}(x) \defeq \sum_{q \geq 1}\frac{\c_q(x)}{q} \quad \text{and} \quad \widetilde{0}_{\text{Har}}(x) \defeq \sum_{q \geq 1}\frac{\c_q(x)}{\varphi(q)},
$$
as well as the generalization of the number-of-divisors function \cite{Ramanujan1},
\begin{equation}
\widetilde{\sigma}_0(x) \defeq -\sum_{q \geq 1}\frac{\log q}{q}\c_q(x).
\label{eq:numberofdivisorgeneralization}
\end{equation}
It is evident from \thmref{thm:maintheorem} that these series converge if and only if $x \in \Z$. 

For $\sigma_0(n)$ and $\sigma_1(n)$, we are able to construct continuous and absolutely convergent Ramanujan expansions that interpolate these functions, although only locally on $\R$. We describe these constructions in the next section.

\section{Real Variable Ramanujan Expansions of $\sigma_0(n)$ and $\sigma_1(n)$}
\label{sec:divisorExtension}

Fix $\alpha \in \Z \backslash \{0\}$ and denote its divisors by $d_1, \dots, d_N$, where $N = \sigma_0(\alpha)$. Define $P_{\alpha}(x)$ as the degree-$N$ polynomial whose roots are $d_1, \dots, d_N$:
\begin{equation}
P_{\alpha}(x) \defeq \prod_{i=1}^N(x - d_i) = \sum_{n=0}^N a_n x^n.
\label{eq:divisorPolynomial}
\end{equation}
The polynomial coefficients $a_n$ are given by
\begin{equation}
a_n = (-1)^{N-n}e_{N-n}(d_1,\dots, d_N),
\label{eq:coefficients}
\end{equation}
where $e_{j}$ is the $j$th elementary symmetric polynomial in the divisors of $\alpha$:
$$
e_{j}(d_1,\dots, d_N) \defeq \sum_{1 \leq i_1 < \cdots < i_j \leq N} d_{i_1} \cdots d_{i_j}.
$$
Since $P_{\alpha}(d) = 0$ for each $d \in \{d_1, \dots, d_N\}$, we have the linear system of equations
$$
\begin{cases}
P_{\alpha}(d_1) = a_Nd_1^N + \cdots + a_1d_1 + a_0 = 0\\
P_{\alpha}(d_2) = a_Nd_2^N + \cdots + a_1d_2 + a_0 = 0\\
\hspace{8em}\vdots\\
P_{\alpha}(d_N) = a_Nd_N^N + \cdots + a_1d_N + a_0 = 0.
\end{cases}
$$
In fact, we have for any natural number $k$ the more general system
$$
\begin{cases}
a_Nd_1^{k + N} + \cdots + a_1d_1^{k+1} + a_0d_1^{k} = 0\\
a_Nd_2^{k + N} + \cdots + a_1d_2^{k+1} + a_0d_2^{k} = 0\\
\hspace{7em}\vdots\\
a_Nd_N^{k + N} + \cdots + a_1d_N^{k+1} + a_0d_N^{k} = 0.
\end{cases}
$$
Adding the equations in this second system implies
$$
a_N\left(d_1^{k + N} + \cdots + d_N^{k + N}\right) + \cdots + a_0\left(d_1^{k} + \cdots + d_N^{k}\right) = 0,
$$
which is equivalent to
$$
a_N\sigma_{k + N}(\alpha) + \cdots + a_0\sigma_{k}(\alpha) = 0.
$$
By \eref{eq:coefficients} and the fact that no divisor of $\alpha$ is zero, $a_0 = d_1\cdots d_N \neq 0$. Consequently,
\begin{equation}
\sigma_k(\alpha) = -\frac{1}{a_0}\big( a_N\sigma_{k + N}(\alpha) + \cdots + a_1\sigma_{k + 1}(\alpha) \big).
\label{eq:newformulation}
\end{equation}
Now, on substituting equations \eref{eq:divisorFunction} and \eref{eq:coefficients} into \eref{eq:newformulation}, we obtain a new and nontrivial formula for the divisor function ($k \geq 0$, $\alpha \in \Z \backslash \{0\}$):
\begin{equation}
\sigma_k(\alpha) = \sum_{q \geq 1} \sum_{j=0}^{N - 1} \left[ \frac{(-1)^{j+1} e_j(d_1,\dots, d_N)}{d_1\cdots d_N}  \cdot \frac{\zeta(N + k + 1 - j)\alpha^{N + k - j}}{q^{N + k + 1 - j}}\right]c_q(\alpha).
\label{eq:newDivisorSum}
\end{equation}
This equation constitutes a Ramanujan expansion of $\sigma_k(n)$, albeit an ``impure'' one as the Ramanujan coefficients depend on the argument $\alpha$.

We now take $k = 1$ in \eref{eq:newformulation}, and redefine $\widetilde{\sigma}_1(x)$ in \eref{eq:sumofdivisorcontinuous} by the continuous generalization of \eref{eq:newformulation}:\footnote{We explicitly include in $\widetilde{\sigma}_1(x, \alpha)$ a functional dependence on $\alpha$ to highlight the fact that $\alpha$ is implicit in the $a_n$ coefficients.}
\begin{equation}
\widetilde{\sigma}_1(x, \alpha) \defeq -\frac{1}{a_0}\big( a_N\widetilde{\sigma}_{N + 1}(x) + \cdots + a_1\widetilde{\sigma}_{2}(x) \big).
\label{eq:divisorFunctionContinuationDef}
\end{equation}
\corref{cor:divisorCor2} proves that the $\widetilde{\sigma}_{i + 1}(x)$ functions on the right side converge absolutely for all $x \in \R$ because $i > 0$. It follows that $\widetilde{\sigma}_1(x, \alpha)$ converges absolutely for all $x \in \R$. On substituting equations \eref{eq:divisorExtension} and \eref{eq:coefficients} into \eref{eq:divisorFunctionContinuationDef}, we obtain the continuous analogue of \eref{eq:newDivisorSum} with $k = 1$:
\begin{equation}
\widetilde{\sigma}_1(x, \alpha) = \sum_{q \geq 1} \sum_{j=0}^{N - 1} \left[ \frac{(-1)^{j+1} e_j(d_1,\dots, d_N)}{d_1\cdots d_N}  \cdot \frac{\zeta(N+2-j)x^{N+1-j}}{q^{N+2-j}}\right] \c_q(x).
\label{eq:newDivisorSumTwo}
\end{equation} 
Unlike \eref{eq:sumofdivisorcontinuous}, this constitutes a continuous and absolutely convergent Ramanujan expansion for $\widetilde{\sigma}_1(x, \alpha)$.

By a similar construction with $k = 0$ in \eref{eq:newformulation}, but in which we replace $\sigma_1(\alpha)$ by \eref{eq:newDivisorSumTwo}, we redefine $\widetilde{\sigma}_0(x)$ in \eref{eq:numberofdivisorgeneralization} to
\begin{equation}
\widetilde{\sigma}_0(x, \alpha) \defeq -\frac{1}{a_0}\big( a_N\widetilde{\sigma}_{N}(x) + \cdots + a_2\widetilde{\sigma}_{2}(x)  +  a_1\widetilde{\sigma}_{1}(x, \alpha) \big).
\label{eq:numberdivisorFunctionContinuationDef}
\end{equation}
On inserting equations \eref{eq:divisorExtension} and \eref{eq:coefficients} into \eref{eq:numberdivisorFunctionContinuationDef}, we obtain an absolutely convergent and continuous Ramanujan expansion for $\widetilde{\sigma}_0(x, \alpha)$, akin to $\widetilde{\sigma}_1(x, \alpha)$ in \eref{eq:newDivisorSumTwo}.

On what subset of $\R$ do \eref{eq:newDivisorSumTwo} and \eref{eq:numberdivisorFunctionContinuationDef} correctly interpolate $\sigma_1(n)$ and $\sigma_0(n)$, respectively? The answer, unfortunately, is on a local subset only---namely, the open neighborhood $(\alpha - 1, \alpha + 1) \subset \R$. This follows because \eref{eq:newformulation} presupposes a \emph{fixed} integer $\alpha$ by which the coefficients in \eref{eq:newformulation} are defined. It is plain, then, that only in a neighborhood of $\alpha$ containing no other integers will the equality \eref{eq:newformulation} necessarily hold, since the polynomial coefficients in \eref{eq:divisorPolynomial} for any integer other than $\alpha$ are generally different from those in $P_\alpha(x)$.

So while we have shown that there exist continuous and absolutely convergent Ramanujan expansions that interpolate $\sigma_0(n)$ and $\sigma_1(n)$, they do so only locally in the neighborhood around a given integer. This begs the question of whether there exists a continuous and absolutely convergent Ramanujan expansion that interpolates $\sigma_1(n)$ (and perhaps even $\sigma_0(n)$) \emph{globally} on $\R$, or at least on the positive reals.

\bibliographystyle{amsplain}
\bibliography{DivisorReferences}

\providecommand{\bysame}{\leavevmode\hbox to3em{\hrulefill}\thinspace}
\providecommand{\MR}{\relax\ifhmode\unskip\space\fi MR }
\providecommand{\MRhref}[2]{%
  \href{http://www.ams.org/mathscinet-getitem?mr=#1}{#2}
}
\providecommand{\href}[2]{#2}
\begin{thebibliography}{1}

\bibitem{Cohen2}
E.~Cohen, \emph{Arithmetical functions associated with the unitary divisors of
  an integer}, Math. Z. \textbf{74} (1960), 66--80.

\bibitem{Hardy1}
G.~H. Hardy, \emph{Ramanujan: Twelve lectures on subjects suggested by his life
  and work}, Cambridge University Press, Cambridge, 1940.

\bibitem{Lucht1}
L.~G. Lucht, \emph{A survey of {Ramanujan} expansions}, Int. J. Number Theory
  \textbf{6} (2010), 1785--1799.

\bibitem{Murty1}
M.~R. Murty, \emph{Ramanujan series for arithmetical functions},
  Hardy-Ramanujan J. \textbf{36} (2013), 21--33.

\bibitem{Ramanujan1}
S.~Ramanujan, \emph{On certain trigonometric sums and their applications in the
  theory of numbers}, Transactions of the Cambridge Phil. Society \textbf{22}
  (1918), 179--199.

\bibitem{Schwarz1}
W.~Schwarz and J.~Spilker, \emph{Arithmetical functions}, London Mathematical
  Society Lecture Note Series, vol. 184, Cambridge University Press, Cambridge,
  1994.

\bibitem{Sivaramakrishnan1}
R.~Sivaramakrishnan, \emph{Classical theory of arithmetic functions},
  Monographs and Textbooks in Pure and Applied Mathematics, vol. 126, Marcel
  Dekker, Inc., New York, 1989.

\bibitem{Toth1}
L.~T{\'o}th, \emph{Ramanujan expansions of arithmetic functions of several
  variables}, Ramanujan J. \textbf{47} (2018), 589--603.

\bibitem{Ushiroya1}
N.~Ushiroya, \emph{{Ramanujan-Fourier} series of certain arithmetic functions
  of two variables}, Hardy-Ramanujan J. \textbf{39} (2016), 1--20.

\end{thebibliography}

\end{document}